\newtheorem{theorem}{Theorem}
\newtheorem{coro}[theorem]{Corollary}
\newtheorem{prop}[theorem]{Proposition}
\theoremstyle{definition}
\newtheorem{remark}[theorem]{Remark}
\newcommand{\eps}{\varepsilon}
\renewcommand{\phi}{\varphi}
\newcommand{\N}{\mathbb{N}}
\newcommand{\ipr}[1]{\left\langle#1\right\rangle}
\newcommand{\set}[1]{\Bigl\{#1\Bigr\}}
\newcommand{\bset}[1]{\Biggl\{#1\Biggr\}}
\newcommand{\be}{\begin{equation}}
\newcommand{\ee}{\end{equation}}
\newcommand{\co}{\colon}
\newcommand{\calL}{\mathcal{L}}
\DeclareMathOperator{\Span}{span}
\DeclareMathOperator{\rank}{rank}
\DeclareMathOperator{\codim}{codim}
\begin{document}

%
%
%
%
%
%
%
%
%

\title[Inequalities between s-numbers]{Inequalities between s-numbers}

\author{
Mario Ullrich}
\address{Institut f\"ur Analysis, 
Johannes Kepler Universit\"at Linz, Austria
} 
\email{mario.ullrich@jku.at}

\subjclass{
Primary 47B06;      
Secondary 
46B50,  
47B01
}

\keywords{s-numbers, Hilbert numbers, Pietsch}

\date{September 16, 2014}


\dedicatory{In memory of Albrecht Pietsch}


\begin{abstract} 
Singular numbers of linear operators between Hilbert spaces were generalized to Banach spaces by s-numbers (in the sense of Pietsch). 
This allows for different choices, including 
approximation, Gelfand, Kolmogorov and Bernstein numbers. 
Here, we present an elementary proof of a bound between the smallest and the largest s-number. 
\end{abstract}

\maketitle

\medskip

We start with introducing the terminology and a presentation of the results. 
In Section~\ref{sec:hist} we will discuss them and some
history.  
Proofs are given in Section~\ref{sec:proof}.

\section{s-numbers}

In what follows, let $X$, $Y$, $Z$ and $W$ be real or complex Banach spaces.  
The (closed) unit ball of $X$ is denoted by $B_X$, 
the dual space of $X$ by $X'$, and the identity map on $X$ is denoted by $I_X$. 
For a closed subspace $M\subset X$, we write $J_M^X$ for the embedding 
$J_M^X\co M\to X$ with $J_M^X(x)=x$, 
and $Q_M^X$ for the canonical map $Q_M^X\co X\to X/M$ with $Q_M^X(x)=x+M$
onto the quotient space 
$X/M:=\{x+M\co x\in X\}$ with norm $\|x+M\|_{X/M}:=\inf_{m\in M}\|x+m\|_X$.
The dimension of a subspace $M\subset X$ is denoted by $\dim(M)$, and by $\codim(M):=\dim(X/M)$ we denote its codimension.

The class of all bounded linear operators between Banach spaces 
is denoted by $\calL$, and by $\calL(X,Y)$ we denote those operators from $X$ to $Y$, equipped with the operator norm. 
The rank of an operator $S\in\calL(X,Y)$ is defined by $\rank(S):=\dim(S(X))$.

\medskip
\goodbreak

A map $S\to (s_n(S))_{n\in\N}$ assigning to every operator $S\in\calL$ a non-negative scalar sequence $(s_n(S))_{n\in\N}$ is called an 
\textit{s-number sequence} if, for all $n\in\N$, the following conditions are satisﬁed

\begin{enumerate}[label=(S\arabic*)]
\itemsep=2mm
\item $\|S\|=s_1(S)\ge s_2(S) \ge \dots \ge 0$ \qquad for all \; $S\in\calL$,
\item $s_n(S+T) \,\le\, s_n(S)+\|T\|$ \qquad for all \; $S,T\in\calL(X,Y)$,
\item $s_n(BSA) \,\le\, \|B\| \, s_n(S) \, \|A\|$ \qquad where \;$W \stackrel{A}{\longrightarrow} X\stackrel{S}{\longrightarrow} Y\stackrel{B}{\longrightarrow} Z$, 
\item $s_n(I_{\ell_2^n})=1$, 
\item $s_n(S)=0$ \quad whenever $\rank(S)<n$.
\end{enumerate}
\medskip
We call $s_n(S)$ the \textit{$n^{\rm th}$ s-number} of the operator $S$. To indicate the underlying Banach spaces, we sometimes write $s_n(S\co X\to Y)$.

\goodbreak

There are some especially important examples of s-numbers: 
\begin{itemize}
    \item approximation numbers: 
    \[
    a_n(S) \,:=\, \inf\set{\|S-L\|\co L\in\calL(X,Y), \; \rank(L)<n}
    \]
    \item Bernstein numbers: 
    \[
    b_n(S) \,:=\, \sup\set{\inf_{x\in M\setminus\{0\}}\frac{\|Sx\|}{\|x\|}\co M\subset X, \; \dim(M)=n}
    \]
    \item Gelfand numbers: 
    \[
    c_n(S) \,:=\, \inf\set{\|S J_M^X\|\co M\subset X, \; \codim(M)<n} 
    \] 
    \item Kolmogorov numbers: 
    \[
    d_n(S) \,:=\, \inf\set{\|Q_N^Y S\|\co N\subset Y, \; \dim(N)<n}
    \]
    \item Weyl numbers: \\[-4mm]
    \[
    x_n(S) \,:=\, \sup\bset{\frac{a_n(SA)}{\|A\|}\co A\in\calL(\ell_2,X),\; A\neq 0}
    \]
    \item Hilbert numbers: 
    \[
    \begin{split}
    h_n(S) \,:&=\, \sup\bset{\frac{a_n(BSA)}{\|B\|\|A\|}\co A\in\calL(\ell_2,X), \; B\in\calL(Y,\ell_2),\; A,B\neq0}. \\
    \end{split}
    \]
\end{itemize}

\bigskip 

We refer to~\cite{Pie87,Pie07} for a detailed treatment of the above, and a few other, s-numbers and their specific properties.  

\begin{remark} \label{rem:def}
The original definition of s-numbers in~\cite{Pietsch-s} 
used the stronger \textit{norming property} 
$(S4')\co s_n(I_X)=1$ for all $X$ with $\dim(X)\ge n$. 
This did not allow for $x_n$ and $h_n$, 
and has been weakened in~\cite{Bauhardt,Pietsch-Weyl} for defining 
them, leading to the least restrictive axioms that still imply uniqueness for Hilbert spaces, see Proposition~\ref{prop1}.  
It is sometimes assumed that the s-number sequence is \textit{additive}, i.e., 
$(S2')\co s_{m+n-1}(S+T)\le s_m(S)+s_n(T)$, 
or \textit{multiplicative}, i.e., 
$(S3')\co s_{m+n-1}(ST)\le s_m(S) s_n(T)$. 
Both properties hold for $a_n,c_n,d_n,x_n$, 
while $h_n$ is only additive, 
and $b_n$ is neither of the two, 
see~\cite{Pie07,Pie08}. 
\end{remark}

\goodbreak 

The following proposition is well-known, see~\cite[2.3.4 \& 2.6.3 \& 2.11.9]{Pie87}. 

\begin{prop} \label{prop1}
For every s-number sequence $(s_n)$, $S\in\calL$ and $n\in\N$, we have 
\[
h_n(S) \,\le\, s_n(S) \,\le\, a_n(S). 
\]
Equalities hold if $S\in\calL(H,K)$ for Hilbert spaces $H$ and $K$.
\end{prop}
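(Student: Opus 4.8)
The plan is to establish the two inequalities separately, using only the axioms (S1)--(S5), and then to identify all three quantities in the Hilbert-space case.

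For the upper bound $s_n(S)\le a_n(S)$, I would fix an operator $L\in\calL(X,Y)$ with $\rank(L)<n$ and write $S=(S-L)+L$. By (S5) we have $s_n(L)=0$, and then (S2) gives $s_n(S)\le s_n(L)+\|S-L\|=\|S-L\|$. Taking the infimum over all admissible $L$ yields $s_n(S)\le a_n(S)$.

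For the lower bound $h_n(S)\le s_n(S)$, let $A\in\calL(\ell_2,X)$ and $B\in\calL(Y,\ell_2)$ be nonzero. The ideal property (S3) gives $s_n(BSA)\le\|B\|\,s_n(S)\,\|A\|$. On the other hand, I want $a_n(BSA)\le s_n(BSA)$; this is the reverse of the inequality just proved, so it needs a separate argument valid for operators into $\ell_2$ (indeed into any Hilbert space). The idea is that on a Hilbert space the optimal rank-$<n$ approximant is furnished by a spectral/orthogonal projection: given $T=BSA\co\ell_2\to\ell_2$, one uses (S4) together with (S3), applied to suitable isometries and metric surjections between $\ell_2^n$ and subspaces of $\ell_2$, to show $a_n(T)\le s_n(T)$. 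Combining, $a_n(BSA)\le\|B\|\,s_n(S)\,\|A\|$, and dividing by $\|A\|\,\|B\|$ and taking the supremum gives $h_n(S)\le s_n(S)$.

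The equality statement for $S\in\calL(H,K)$ then follows: taking $A$ and $B$ to be identity maps (or isometric identifications) shows $h_n(S)=a_n(S)$ directly from the definition of $h_n$, and the sandwich $h_n(S)\le s_n(S)\le a_n(S)$ forces all three to coincide. The main obstacle is the auxiliary fact $a_n(T)\le s_n(T)$ for operators with Hilbertian range (equivalently, that on $\ell_2^n$ every s-number equals the approximation number), which is where axiom (S4) must genuinely be used; everything else is a short manipulation of the axioms. I expect this to reduce, via the singular value decomposition of the finite-rank part, to the statement $s_n(\mathrm{diag}(\sigma_1,\dots,\sigma_n))=\sigma_n$ on $\ell_2^n$, which one gets by squeezing between two scalar multiples of $I_{\ell_2^n}$ using (S1), (S2) and (S4).
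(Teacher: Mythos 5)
Your proposal takes essentially the same route as the paper: $s_n(S)\le a_n(S)$ via (S2) and (S5) applied to $S=(S-L)+L$, and $h_n(S)\le s_n(S)$ via (S3) combined with the coincidence of all s-numbers with approximation numbers for operators between Hilbert spaces, which the paper does not prove but cites from~\cite[2.11.9]{Pie87} (noting the SVD argument for compact operators), and which also yields the equality statement since $h_n$ is itself an s-number sequence. One small correction to your sketch of that auxiliary fact: the lower bound $s_n\bigl(\mathrm{diag}(\sigma_1,\dots,\sigma_n)\bigr)\ge\sigma_n$ is not obtained from (S1), (S2) and (S4) by squeezing between multiples of $I_{\ell_2^n}$, but from (S4) together with the ideal property (S3), e.g.\ by writing $\sigma_n I_{\ell_2^n}$ as the composition of $\mathrm{diag}(\sigma_1,\dots,\sigma_n)$ with the norm-one operator $\sigma_n\,\mathrm{diag}(\sigma_1,\dots,\sigma_n)^{-1}$; moreover, since $BSA$ need not be compact, the reduction to this diagonal case requires a spectral-theorem argument rather than a plain SVD.
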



For convenience, 
we present a sketch of the proof of the inequalities in Section~\ref{sec:proof}. 
Using only elementary arguments, 
we prove the following reverse inequality,  
which is known in a 
more involved form 
based on an intermediate comparison with $x_n(S)$, see~\cite[2.10.7]{Pie87} or~\cite[6.2.3.14]{Pie07}, 
or Remark~6 in~\cite{KNU24}.


\begin{theorem}\label{thm:c-h}
For all $S\in\calL$ and $n\in\N$, we have 
\[
\max\set{c_n(S),\, d_n(S)} 
\;\le\; n\,\left(\prod_{k=1}^n h_k(S) \right)^{1/n}.
\]    
\end{theorem}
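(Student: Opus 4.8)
The plan is to estimate $c_n(S)$ (the case of $d_n(S)$ being dual) by constructing, from $S$, a finite-rank operator on which the Hilbert numbers can be read off as singular values, and then extracting a Gelfand-number bound from a volumetric/determinant argument. First I would fix $S \in \calL(X,Y)$ and $n \in \N$; choosing a suitable $n$-dimensional subspace $M \subset X$ on which $S$ is "large" is what $c_n$ wants us to rule out, so the idea is: given any putative lower bound for $c_n(S)$, produce an $\ell_2 \to X \to Y \to \ell_2$ sandwich $B S A$ whose approximation numbers $a_1(BSA) \ge \dots \ge a_n(BSA)$ are all comparable to that lower bound, forcing $\prod_{k=1}^n h_k(S) \ge \prod_{k=1}^n a_k(BSA)/(\|A\|\|B\|) $ to be large. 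Concretely I would take $A \co \ell_2^n \to X$ to be an Auerbach parametrization of a near-optimal $M$ (so $\|A\| \le 1$ and $A^{-1}\co M \to \ell_2^n$ has norm $\le \sqrt n$ or even $=\sqrt n$ by John's theorem on the ball of $M$), and $B$ a norm-one functional-type map $Y \to \ell_2^n$ obtained by picking, via Hahn--Banach, functionals on $Y$ that norm $S$ on a well-chosen basis of $SM$.

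The key chain of inequalities I expect is: $a_k(BSA) \ge h_k(S)\,\|A\|\,\|B\|$ is the wrong direction, so instead I would run it as $h_k(S) \ge a_k(BSA)/(\|A\|\|B\|)$ (this is just the definition of $h_k$ applied to the sandwich), hence $\prod_{k=1}^n h_k(S) \ge (\|A\|\|B\|)^{-n}\prod_{k=1}^n a_k(BSA)$. Now $\prod_{k=1}^n a_k(T)$ for an operator $T$ of rank $\le n$ on an $n$-dimensional space equals (up to the chosen normalizations) $|\det T|$ in appropriate bases, and I would lower-bound this determinant by $c_n(S)^n$ times a factor coming from the Auerbach/John constants — the point being that if $c_n(S)$ is large then $S$ restricted to every codimension-$(n-1)$ subspace is large, in particular $S$ is "uniformly large" on the $n$-dimensional $M$, which translates into $|\det(BSA)|$ being at least $c_n(S)^n$ divided by $\|A^{-1}\|^n \le n^{n/2}$ and a similar $\|B\|$-type factor of at most $1$. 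Combining, $\prod_{k=1}^n h_k(S) \ge c_n(S)^n / n^{n}$ after absorbing the $\|A\|,\|B\| \le 1$ and the $n^{n/2}$ from $\|A^{-1}\|$ together with another $n^{n/2}$ lost in relating $\det$ on a general $n$-dimensional normed space to $\det$ on $\ell_2^n$; taking $n$-th roots gives $c_n(S) \le n\,(\prod_{k=1}^n h_k(S))^{1/n}$.

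The main obstacle is getting the constant to be exactly $n$ rather than something like $n^{3/2}$ or $c\cdot n$: this requires being careful about which normalization (John position of the unit ball of $M$, versus an Auerbach basis) is used and squeezing the two places where a $\sqrt n$ can appear — the embedding $A^{-1}\co M \to \ell_2^n$ and the co-embedding on the $Y$-side — into a single factor of $n$. I would handle this by choosing $A$ so that $A(B_{\ell_2^n})$ is the John ellipsoid of a near-optimal $M$, giving $\|A\| \le 1$ and $A(B_{\ell_2^n}) \supseteq n^{-1/2} B_M$; on the $Y$-side one only needs a norm-one $B$ with $\|BSx\| \ge (\text{something})\|Sx\|$ for $x$ in a specific basis, and a Gram--determinant/Hadamard inequality argument shows the product of singular values of $BSA$ is at least $n^{-n/2} c_n(S)^n$ up to the optimization over $M$. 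A secondary technical point is that $c_n$ is an infimum over subspaces, so strictly one works with $c_n(S) - \eps$ and a subspace $M$ with $\|SJ_M\| \ge c_n(S)-\eps$ for every codimension-$<n$ subspace — actually one needs the stronger statement that $\inf_{x \in M\setminus\{0\}} \|Sx\|/\|x\| \ge c_n(S)$ for some $n$-dimensional $M$, which is precisely the well-known equality (for the relevant spaces, or an $\eps$-version in general) linking $c_n(S)$ to an inscribed $n$-dimensional subspace, and I would either cite or quickly reprove this Bernstein-type lower bound $b_n(S) \le c_n(S)$-adjacent fact before launching the determinant estimate.
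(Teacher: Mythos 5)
There is a genuine gap at the heart of your reduction: you need an $n$-dimensional subspace $M\subset X$ with $\inf_{x\in M\setminus\{0\}}\|Sx\|/\|x\|\ge c_n(S)-\eps$, which is precisely the statement $b_n(S)\ge c_n(S)-\eps$. Only the opposite inequality $b_n(S)\le c_n(S)$ holds in general Banach spaces: knowing $\|SJ_M^X\|\ge c_n(S)$ for every subspace $M$ of codimension $<n$ produces, on each such subspace, a \emph{single} unit vector with large image, not an $n$-dimensional subspace on which $S$ is uniformly bounded below (sup-inf and inf-sup quantities are genuinely different here). Moreover the gap between $b_n$ and $c_n,d_n$ can grow with $n$ — this is exactly why Mityagin and Henkin could only prove $d_n(S)\le n^2\,b_n(S)$ and why replacing $n^2$ by $n$ is still open; if your Bernstein-type lemma were true (even up to a constant), the theorem and in fact the much stronger individual-$n$ bounds described in the paper as a long-standing open problem would follow at once. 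So this step cannot be ``cited or quickly reproved''; it is false. A secondary soft spot, even granting that lemma, is the $Y$-side of the determinant estimate: a norm-one $B$ assembled from Hahn--Banach functionals does not automatically preserve enough ``volume'' of $S(M)$ to give $|\det(BSA)|\ge n^{-n/2}c_n(S)^n$, and the Hadamard/Gram argument together with the bookkeeping of the two $n^{n/2}$ losses is only asserted, not carried out.

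The repair — and the paper's actual route, which shares your sandwich-and-determinant skeleton — is to avoid any single ``good'' $n$-dimensional subspace. Choose $x_k\in B_X$ and $b_k\in B_{Y'}$ inductively: once $b_1,\dots,b_{k-1}$ are fixed, the subspace $M_k=\set{x\in X\co \ipr{Sx,b_j}=0 \text{ for } j<k}$ has codimension $<k$, hence contains one unit vector $x_k$ with $(1+\eps)\|Sx_k\|\ge \|SJ_{M_k}^X\|\ge c_k(S)$, and Hahn--Banach provides $b_k$ norming $Sx_k$. With $A\xi=\sum_i\xi_i x_i$ and $By=(\ipr{y,b_i})_{i=1}^n$ one gets $\|A\|,\|B\|\le\sqrt n$, and $BSA$ is triangular, so its determinant, which equals $\prod_{k=1}^n a_k(BSA)$, is at least $(1+\eps)^{-n}\prod_{k=1}^n c_k(S)$; combined with $a_k(BSA)\le\|A\|\|B\|\,h_k(S)\le n\,h_k(S)$ and the monotonicity $c_n(S)\le\bigl(\prod_{k\le n}c_k(S)\bigr)^{1/n}$ this yields the claim. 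No John ellipsoid or Auerbach basis is needed, no lower bound for $S$ on an $n$-dimensional subspace is ever used, and the factor $n$ comes solely from $\|A\|\|B\|\le n$; the $d_n$ case is handled by the analogous direct construction (duality alone would require compactness), not merely ``by duality'' as you suggest.
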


\medskip

Since $c_n(I\co \ell_1\to\ell_\infty)\asymp 1$ 
and $h_n(I\co \ell_1\to\ell_\infty)\asymp n^{-1}$, 
see~\cite[6.2.3.14]{Pie07} and~\cite{HL84}, 
this result is best possible up to constants.

\bigskip 
\goodbreak

We cannot obtain bounds for individual $n$ from Theorem~\ref{thm:c-h}, 
see also Remark~\ref{rem:eig}, 
but combining it with the inequality 
$n^\alpha\le e^\alpha\, (n!)^{\alpha/n}$ 
for $\alpha\ge0$, 
we obtain a 
more handy form. 
Moreover, the known fact that 
$a_n(S)\le (1+\sqrt{n})\, c_n(S)$, 
see~
\cite[2.10.2]{Pie87}, leads to a bound between $a_n$ and $h_n$, and hence between all s-numbers.  

\medskip

\begin{coro}
For all $S\in\calL$, $\alpha>0$ and $n\in\N$, we have 
\[
c_n(S) \,\le\, e^\alpha\, n^{-\alpha+1}\cdot 
\sup_{k\le n}\, k^\alpha\, h_k(S),
\]
and
\[
a_n(S) \,\le\, 2\,e^\alpha\, n^{-\alpha+3/2}\cdot 
\sup_{k\le n}\, k^\alpha\, h_k(S).
\]
\end{coro}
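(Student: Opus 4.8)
The plan is to derive the Corollary directly from Theorem~\ref{thm:c-h} by combining it with the two elementary inequalities flagged in the text. First I would establish the numerical inequality $n^\alpha \le e^\alpha\,(n!)^{\alpha/n}$ for $\alpha\ge 0$: writing it as $n \le e\,(n!)^{1/n}$, this follows from $n! \ge (n/e)^n$, which is immediate from the Taylor expansion $e^n = \sum_{j\ge 0} n^j/j! \ge n^n/n!$. Raising to the power $\alpha$ preserves the inequality since $\alpha\ge 0$.

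Next I would feed this into Theorem~\ref{thm:c-h}. Starting from
\[
c_n(S) \;\le\; n\left(\prod_{k=1}^n h_k(S)\right)^{1/n},
\]
I bound each factor crudely by $h_k(S) = k^{-\alpha}\cdot k^\alpha h_k(S) \le k^{-\alpha}\sup_{j\le n} j^\alpha h_j(S)$. Pulling the supremum out of the product gives
\[
\prod_{k=1}^n h_k(S) \;\le\; \left(\prod_{k=1}^n k^{-\alpha}\right)\left(\sup_{j\le n} j^\alpha h_j(S)\right)^n = (n!)^{-\alpha}\left(\sup_{j\le n} j^\alpha h_j(S)\right)^n,
\]
so that $\left(\prod_{k=1}^n h_k(S)\right)^{1/n} \le (n!)^{-\alpha/n}\sup_{j\le n} j^\alpha h_j(S)$. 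Multiplying by $n$ and applying the numerical inequality in the form $n\,(n!)^{-\alpha/n} \le e^\alpha\, n\cdot n^{-\alpha} = e^\alpha\, n^{-\alpha+1}$ yields the first claimed bound. For the second bound I would simply invoke $a_n(S)\le (1+\sqrt n)\,c_n(S)$ from \cite[2.10.2]{Pie87}, note that $1+\sqrt n \le 2\sqrt n$ for $n\ge 1$, and combine with the first estimate: $a_n(S) \le 2\sqrt n \cdot e^\alpha n^{-\alpha+1}\sup_{k\le n} k^\alpha h_k(S) = 2e^\alpha n^{-\alpha+3/2}\sup_{k\le n}k^\alpha h_k(S)$.

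There is essentially no obstacle here — the Corollary is a routine packaging of Theorem~\ref{thm:c-h} — so the only points requiring any care are bookkeeping ones: checking that the supremum genuinely factors out of the product (it does, term by term), and confirming the constant $1+\sqrt n \le 2\sqrt n$ holds for all $n\in\N$ (it does, with equality impossible but the bound valid since $1\le\sqrt n$). One might note that the hypothesis $\alpha>0$ rather than $\alpha\ge 0$ in the Corollary is harmless; the argument works verbatim for $\alpha\ge 0$, the restriction presumably being stated because $\alpha=0$ gives nothing beyond Theorem~\ref{thm:c-h} itself.
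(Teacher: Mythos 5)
Your proposal is correct and follows exactly the route the paper indicates (the paper only sketches it in the text preceding the Corollary): bound each $h_k(S)$ by $k^{-\alpha}\sup_{j\le n} j^\alpha h_j(S)$ in the geometric mean from Theorem~\ref{thm:c-h}, apply $n^\alpha\le e^\alpha (n!)^{\alpha/n}$ via $n!\ge (n/e)^n$, and then use $a_n(S)\le(1+\sqrt n)c_n(S)\le 2\sqrt n\, c_n(S)$ for the second estimate. All steps check out, so there is nothing to add.
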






\section{A bit of history} \label{sec:hist}

We provide a brief description 
of the relevant facts from 
(the highly recommended) 
``History of Banach Spaces
and Linear Operators'' of Pietsch~\cite{Pie07}, 
and give some further references.

\textbf{Singular numbers} 
of operators 
on Hilbert spaces 
have become 
fundamental 
tools in (applied) mathematics 
since their introduction in 1907 by Schmidt~\cite{Schmidt1907}. 
For compact $S\in\calL(H,K)$ between complex Hilbert spaces~$H,K$, 
the \textit{singular numbers} are 
defined by 
$s_k(S):=\sqrt{\lambda_k(SS^*)}$, 
where the \textit{eigenvalues} $\lambda_k(T)$ of $T\in\calL(X,X)$ are characterized by $Te_k=\lambda_k(T)\cdot e_k$ for some $e_k\in X\setminus\{0\}$, and ordered decreasingly. 
%
Applications range from 
the study of \textit{eigenvalue distributions} of operators,  
see~\cite{Koe86} or~\cite[6.4]{Pie07}, 
to the classification of \textit{operator ideals}~\cite[6.3]{Pie07}, 
to the singular value decomposition, aka Schmidt representation, 
with its many applications.

\textbf{\!\!\!s-numbers} are a generalization to 
linear operators between Banach spaces. 
However, 
there is no unique substitute for singular numbers but, 
depending on the context, different 
s-numbers may be used to 
\textit{quantify compactness}, 
while 
others may be easier to compute. 
As Pietsch wrote in~\cite[6.2.2.1]{Pie07}, 
``we have a large variety of s-numbers that make our life more interesting.''

Most notably, 
$a_n$, $b_n$, $c_n$ and $d_n$ were already known in the 1960s, 
sometimes in a related form as ``width'' of a set, 
see~\cite{Ism74,Tikh60} or~\cite[6.2.6]{Pie07}, 
and are 
by now part of the foundation of approximation theory~\cite{Pinkus85} 
and information-based complexity~\cite{Mathe90,Novak}. 
More recent treatments of the subject 
and extensions 
can be found, e.g., in~\cite{FMS21,LE11,SX24}. 
Let us also highlight~\cite{KNU24}, where 
we discuss the relation of s-numbers to \textit{minimal approximation errors} in detail, 
and use variants of Theorem~\ref{thm:c-h} 
to bound the \emph{maximal gain} of randomized/adaptive algorithms over 
deterministic/non-adaptive ones. 

An axiomatic theory of s-numbers has been developed by Pietsch~\cite{Pietsch-s,Pie87,Pie07} in the 1970s. 
This, in particular, allowed for a characterization of the smallest/largest s-number (with certain properties), 
see also Remark~\ref{rem:def}. 

\textbf{Inequalities} 
and several relations between s-numbers have already been collected 
in~\cite{Pietsch-s}, see also~\cite[2.10]{Pie87} and~\cite[6.2.3.14]{Pie07}. 
A particularly interesting bound is 
$d_n(S)\le n^2 b_n(S)$, which was proven 
by Mityagin and Henkin~\cite{MH63} in 1963. 
They also conjectured that $n^2$ can be replaced by $n$, see also~\cite[p.~24]{Pinkus85}. 
This bound with $b_n$ replaced by $h_n$, 
and the corresponding conjecture, have been given in~\cite{Bauhardt}, 
and apparently, the bound has not been improved since then. 
However, in the weaker form as in Theorem~\ref{thm:c-h}, this problem has been solved by Pietsch~\cite{Pietsch-Weyl} in 1980, see the final remarks there. In particular, it is shown that 
there is some $C_\alpha>0$ such that 
$\sup_{k\ge1} k^\alpha d_k(S) \le C_\alpha\, \sup_{k\ge1} k^{\alpha+1} h_k(S)$, see also~\cite[2.10.7]{Pie87} or~\cite[6.2.3.14]{Pie07}. 

\goodbreak

\textbf{The proof} 
of this result is the blueprint for the proof of 
Theorem~\ref{thm:c-h}. 
However, those proofs employ an intermediate comparison with the 
Weyl numbers $x_n$, which lie somehow 
between 
$h_n$ and $c_n$. 
Despite interesting consequences, 
this approach requires the multiplicativity of $x_n$, the notion of 2-summing norm, and some technical difficulties. 
The proof presented here is elementary: It 
only uses 
definitions and 
known properties of the determinant.

\textbf{Bounds for individual $n$} 
cannot be deduced from this approach, see also Remark~\ref{rem:eig}, 
and it remains a long-standing open problem if 
$d_{bn}(S)\le c\, n\, b_n(S)$, 
or even 
$a_{bn}(S)\le c\, n\, h_n(S)$, for some $b,c\ge1$, see~\cite[Prob.~5]{Pie09} or~\cite[2.10.7]{Pie87}.

\goodbreak 


\section{The proofs} \label{sec:proof}

\begin{proof}[Proof of Proposition~\ref{prop1}]
We refer to~\cite[2.11.9]{Pie87} for the proof that 
$s_n(S)=a_n(S)$ for any s-number sequence $(s_n)$, and any $S\in\calL(H,K)$ for Hilbert spaces~$H$ and~$K$. 
Just note that, for compact $S$, this follows quite directly from the singular value decomposition.
From this and (S3), we obtain 
\[
h_n(S) 
\,=\, \sup\bset{\frac{s_n(BSA)}{\|B\|\|A\|}\co A\in\calL(\ell_2,X), \; B\in\calL(Y,\ell_2), \; A,B\neq 0} 
\,\le\, s_n(S)
\] 
for any $(s_n)$. 
In addition, 
by (S2) and (S5), 
we obtain for any $L$ with $\rank(L)<n$ that
$
s_n(S)
\,\le\, s_n(L) + \|S-L\| 
\,=\, \|S-L\| 
$.
By taking the infimum over all such $L$, we see that 
$s_n(S)\le a_n(S)$. 
\end{proof}

\medskip

\begin{proof}[Proof of Theorem~\ref{thm:c-h}]
We first present the proof from~\cite[2.10.3]{Pie87} of the following statement: 
For fixed $\eps>0$, we can find 
$x_1,\dots,x_n\in B_X$ and $b_1,\dots,b_n\in B_{Y'}$ 
such that $\ipr{Sx_k,b_j}=0$ for $j<k$ and 
$(1+\eps)|\ipr{Sx_k,b_k}|>c_k(S)$ for $k=1,\ldots,n$. 

For this, we inductively assume that $x_k,b_k$ for $k<n$ are already found, and define 
\[
M_n \,:=\, \set{x\in X\co \ipr{Sx,b_k}=0 \text{ for } k<n}. 
\] 
Since $\codim{M_n}<n$, we can choose $x_n\in M_n \cap B_X$ 
with 
\[
(1+\eps) \|Sx_n\| \,\ge\, \|S J_{M_n}^X\| \,\ge\, c_n(S).  
\]
By the Hahn-Banach theorem, we choose $b_n\in B_{Y'}$ with $\ipr{Sx_n,b_n}=\|Sx_n\|\ge\frac{c_k(S)}{1+\eps}$. 

We now define the operators 
\[
A(\xi) \,:=\, \sum_{i=1}^n \xi_i x_i\in X, \quad \xi=(\xi_i)\in\ell_2^n,
\]
and
\[
B(y) \,:=\, \bigl(\ipr{y,b_i}\bigr)_{i=1}^n \in\ell_2^n, \qquad y\in Y, 
\medskip
\]
which satisfy $\|A\|,\|B\|\le \sqrt{n}$, 
and observe that $S_n:=BSA\co\ell_2^n\to\ell_2^n$ is generated by the triangular matrix $(\ipr{Sx_i,b_j})_{i,j=1}^n$ 
with determinant $\det(S_n)\ge 
\prod_{k=1}^n \frac{c_k(S)}{1+\eps}$. 

To obtain a bound with s-numbers, note that 
they all coincide for~$S_n$, esp.~with $a_k(S_n)$, and are equal to the 
singular numbers $s_k(S_n)$, i.e., the roots of the eigenvalues of $S_n S_n^*$, 
see~\cite[6.2.1.2]{Pie07}. 
As the determinant is multiplicative and equals the product of the eigenvalues, we see that 
$\det(S_n)=\sqrt{\det(S_nS_n^*)}=\prod_{k=1}^n a_k(S_n)$. 
%
From the definition of $h_n$, we obtain 
$a_k(S_n)\le \|A\|\|B\|\,h_k(S)\le n\cdot h_k(S)$, 
and hence 
\[\begin{split}
(1+\eps)^{-n}\prod_{k=1}^n c_k(S)  
\;&\le\;\det(S_n) 
\;=\;  \prod_{k=1}^n a_k(S_n) 
\;\le\;  n^n \prod_{k=1}^n h_k(S). 
\end{split}\]
With $\eps\to0$ and $c_n(S)\le \bigl(\prod_{k=1}^n c_k(S)\bigr)^{1/n}$ we obtain the result for $c_n(S)$. 

\goodbreak

The proof for $d_n(S)$ could be done via duality, 
at least for compact $S$, 
see e.g.~\cite[6.2.3.9 \& 6.2.3.12]{Pie07}. 
However, one can also prove it directly 
by inductively choosing $M_n:=\Span\{Sx_k\co k<n\}$, 
$x_n\in B_X$ with 
$(1+\eps) \|Q^Y_{M_n}Sx_n\| \,\ge\, \|Q^Y_{M_n}S\| \,\ge\, d_n(S)$, 
and $b_n\in B_{Y'}$ with 
$\ipr{Sx_n,b_n}=\|Q^Y_{M_n}Sx_n\|$ 
and  
$\ipr{Sx_k,b_n}=0$ for $k<n$. 
The remaining proof is as above. 
\end{proof}

\goodbreak

\begin{remark} \label{rem:eig}
The proof of Theorem~\ref{thm:c-h} uses the determinant 
to relate the eigenvalues $\lambda_k(S_n)$ of $S_n$ (which are the diagonal entries) with its singular numbers. 
Sometimes, the more general \textit{Weyl's inequality}~\cite{Weyl1949} from 1949 is used, which states that 
$
\prod_{k=1}^n |\lambda_k(S)| 
\;\le\; \prod_{k=1}^n a_k(S)
$
for any 
compact $S\in\calL(H,H)$, 
see also~\cite[3.5.1]{Pie07}.
This crucial step appears in all the proofs I am aware of 
that lead to the optimal factor~$n$ in the comparisons. 
Unfortunately, all these approaches use a whole collection of s-numbers, which does not allow for bounds for individual $n$. 

Let us present an example from~\cite[2.d.5]{Koe86} that shows that 
such product bounds 
between eigenvalues and s-numbers 
are to some extent best possible:\\
For $0<\sigma<1$, consider the matrix 
$T_n=(\delta_{j,i+1}+\sigma\cdot \delta_{i,n}\delta_{j,1})_{i,j=1}^n$ 
which represents a mapping on $\ell_2^n$. 
It is easy to verify that $a_k(T_n)=1$ for $k<n$ and $a_n(T_n)=\sigma$. 
(Recall that $a_k$ are the singular numbers in this case.)
Moreover, since $T_n^n=\sigma\cdot I_{\ell_2^n}$, 
we see that $|\lambda_k(T_n)|=\sigma^{1/n}$ for $k=1,\dots,n$. 
This shows that Weyl's inequality, 
as well as the easy corollary 
$|\lambda_n(S)|\le \|S\|^{1-\frac1n} a_n(S)^{1/n}$, 
are in general best possible. 
\end{remark}

\smallskip

\noindent 
{\bf Acknowledgement: } 
I thank Albrecht Pietsch for comments on an earlier version, and for encouraging me to make this note 
as self-contained as it is now. 
The last comments I received from him were on March 8, 2024, where he added that he was busy with his own article. 
My reply, however, was answered by his granddaughter, who informed me that Professor Pietsch had passed away on March 10. \\
I also thank S. Heinrich, A.~Hinrichs, D.~Krieg, T. K\"uhn, E.~Novak and the anonymous referees for helpful comments. 



\end{document}